\documentclass[a4paper]{article}
\usepackage[margin=25mm]{geometry}

\usepackage{authblk}
\usepackage{amsmath}
\usepackage{amssymb}
\usepackage{amsthm}
\usepackage{amscd}
\usepackage{mathtools}
\usepackage{hyperref}
\usepackage{cleveref}

\providecommand{\keywords}[1]
{
  \small	
  \textbf{\textit{Keywords---}} #1
}

\newtheorem{theorem}{Theorem}[section]

\newtheorem{proposition}[theorem]{Proposition}
\newtheorem{lemma}[theorem]{Lemma}
\theoremstyle{definition}

\theoremstyle{remark}


\newcommand\R[1]{\mathbb{R}^{#1}}
\newcommand{\pr}{\mathrm{pr}}
\newcommand{\id}{\mathrm{id}}
\newcommand{\Vor}{\operatorname{Vor}}
\newcommand{\Del}{\operatorname{Del}}
\newcommand{\del}{\operatorname{del}}
\newcommand{\Cech}{\operatorname{\textnormal{\v{C}}}}
\newcommand{\DC}{\operatorname{\textnormal{Del\v{C}}}}
\newcommand{\dC}{\operatorname{\textnormal{del\v{C}}}}

\title{Relative persistent homology}
\date{}

\author[1]{Nello Blaser}
\affil[1]{Department of Informatics, University of Bergen,
  Norway}
\affil[]{\textit {nello.blaser@uib.no}}
\author[2]{Morten Brun}
\affil[2]{Department of Mathematics, University of Bergen,
  Norway}
\affil[]{\textit{morten.brun@uib.no}}


\begin{document}

\maketitle

\begin{abstract}
  The alpha complex efficiently computes persistent homology of a
  point cloud \(X\)
  in Euclidean space when the dimension \(d\)
  is low. Given a subset \(A\)
  of \(X\),
  relative persistent homology can be computed as the persistent
  homology of the relative \v{C}ech complex \(\Cech(X, A)\).
  But this is not computationally feasible for larger point clouds
  \(X\).
  The aim of this note is to present a method for efficient
  computation of relative persistent homology in low dimensional
  Euclidean space. We introduce the relative Delaunay \v{C}ech complex
  \(\DC(X, A)\)
  whose homology is the relative persistent homology. It can be
  constructed from the Delaunay complex of an embedding of the point
  clouds in \((d+1)\)-dimensional Euclidean space.
\end{abstract}
\hspace{10pt}

{\keywords{topological data analysis, relative homology,
  Delaunay-\v{C}ech complex, alpha complex}}

\section{Introduction}\label{sec:intro}
Persistent homology is receiving growing attention in the machine
learning community. In that light, the scalability of persistent
homology computations is of increasing importance. To date, the alpha
complex is the most widely used method to compute persistent homology
for large low-dimensional data sets. 

Relative persistent homology has been considered several times in
recent years. For example Edelsbrunner and Harrer
\cite{Edelsbrunner2008} have presented an application of relative
persistent homology to estimate the dimension of an embedded
manifold. Relative persistent homology is also a way to introduce the
concept of extended persistence \cite{CohenSteiner2009}.  De Silva and
others have shown that the relative persistent homology
\(H_{\ast}(X, A_t)\) with an increasing family of sets \(A_t\) and a
constant \(X = \cup_t A_t\), and the corresponding relative persistent
cohomology have the same barcode \cite{deSilva2011}. They also show
that absolute persistent homology of \(A_t\) can be computed from this
particular type of relative persistent homology. More recently,
Pokorny and others \cite{Pokorny2016} have used relative persistent
homology to cluster two-dimensional trajectories. Some software, such
as PHAT \cite{Bauer2017}, even allows for the direct computation of
relative persistent homology. For an example see the
\href{https://github.com/blazs/phat/blob/master/src/relative_example.cpp}{PHAT
  github repository}.

Despite the fact that relative persistent homology has been considered
in many different situations, we are not aware of a relative version
of the alpha- or Delaunay \v Cech complexes being used. 

Our
contributions are as follows.
\begin{enumerate}
\item We give a new elementary proof that the Delaunay \v Cech complex
  is homotopy equivalent to the \v Cech complex. This has
  previously been shown using discrete Morse theory \cite{MR3605986}.
\item We extend this proof to the relative versions of the Delaunay \v
  Cech complex and the \v Cech complex.
\item We explain how the relative Delaunay \v Cech complex can be
  computed through embedding in a higher dimension.
\end{enumerate}

Together, these contributions result in \cref{thm:main}, which shows
how the relative persistent homology of \v{C}ech persistence modules
\(\Cech_*(X; k)/ \Cech_*(A; k)\) of low-dimensional spaces can be
efficiently computed using a relative Delaunay-\v{C}ech complex.

\begin{theorem} \label{thm:main} Let
  \(A \subseteq X \subseteq \R{d}\) be finite.  The relative
  Delaunay-\v{C}ech complex \(\DC(X, A)\) defined in
  \cref{sec:computedelcech} is homotopy equivalent to the relative
  \v{C}ech complex \(\Cech(X, A)\).

  Moreover, given the cardinalities \(n_X\) of \(X\) and \(n_A\) of
  \(A\), the relative 
  Delaunay-\v{C}ech complex contains at most
  \(O\left((n_X+n_A) \lceil (d+1)/2 \rceil\right)\) simplices.
\end{theorem}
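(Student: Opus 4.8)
The plan is to deduce both assertions from an absolute homotopy equivalence, established first, which is then transported to the relative setting through the embedding of \cref{sec:computedelcech}. Fix a finite \(Y \subseteq \R{d}\) and a radius \(r\), write \(B(y,r)\) for the closed ball, \(V_y\) for the Voronoi cell of \(y\), and \(U_r = \bigcup_{y \in Y} B(y,r)\). First I would record two Nerve-Lemma computations: \(\Cech_r(Y)\) is the nerve of the convex cover \(\{B(y,r)\}_{y \in Y}\) of \(U_r\), and the alpha complex \(\del_r(Y)\) is the nerve of the convex cover \(\{B(y,r) \cap V_y\}_{y \in Y}\) of the same \(U_r\). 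Nonempty finite intersections of convex sets are convex, hence contractible, so both covers are good and each nerve is homotopy equivalent to \(U_r\); moreover these equivalences are compatible with the refinement \(B(y,r)\cap V_y \subseteq B(y,r)\).

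Next I would place the Delaunay--\v{C}ech complex between them. Since \(\DC_r(Y)\) is exactly the restriction of the \v{C}ech filtration to the Delaunay triangulation, we have the sandwich \(\del_r(Y) \subseteq \DC_r(Y) \subseteq \Cech_r(Y)\) of complexes on the vertex set \(Y\), and the composite inclusion \(\del_r(Y) \hookrightarrow \Cech_r(Y)\) is a homotopy equivalence by the compatible nerve maps above. The elementary core of contribution~1 is to show that the left-hand inclusion \(\del_r(Y) \hookrightarrow \DC_r(Y)\) is itself a homotopy equivalence, which I would establish by an explicit deformation of \(\DC_r(Y)\) onto the alpha complex; the two-out-of-three property then forces \(\DC_r(Y) \hookrightarrow \Cech_r(Y)\) to be a homotopy equivalence as well. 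Carried out with maps natural in \(Y\), this is the ingredient reused in one higher dimension below.

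To obtain the relative statement I would realize \(\Cech(X,A)\) as the quotient \(\Cech(X)/\Cech(A)\), whose reduced homology is the relative persistent homology and for which \(\Cech(A)\hookrightarrow\Cech(X)\) is a cofibration; it then suffices to produce a homotopy equivalence of pairs and pass to quotients. \textbf{The main obstacle is that the Delaunay triangulation of \(A\) is not a subcomplex of that of \(X\)}: a Delaunay simplex of \(A\) need not be Delaunay in \(X\), so there is no relative Delaunay--\v{C}ech complex inside a single triangulation in \(\R{d}\). This is exactly what the embedding into \(\R{d+1}\) repairs. Placing \(X\) on the hyperplane \(\{x_{d+1}=0\}\) together with the lifted copy of \(A\) from \cref{sec:computedelcech}, the Delaunay complex of the combined point set in \(\R{d+1}\) contains, inside one honest triangulation, both the structure relevant to \(X\) and a cone on the Delaunay--\v{C}ech complex of \(A\); the relative complex \(\DC(X,A)\) is the resulting subquotient. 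Applying the equivalence of the first two paragraphs in dimension \(d+1\) — which, being built from nerve maps natural under passage to subcovers, is an equivalence of pairs — and passing to quotients yields \(\DC(X,A) \simeq \Cech(X,A)\). The delicate point to check is that the lift faithfully models the cone on \(\Cech(A)\) and creates no homology relative to \(A\), i.e.\ that the Delaunay simplices spanning the two heights contribute nothing spurious; this is the step I expect to require the most care.

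The size bound is then immediate. The relative Delaunay--\v{C}ech complex is a subcomplex of the Delaunay triangulation of the \(n_X + n_A\) points used in \(\R{d+1}\), and by the Upper Bound Theorem (via the lift to a paraboloid in one further dimension) the Delaunay triangulation of \(N\) points in \(\R{D}\) has \(O\!\left(N^{\lceil D/2 \rceil}\right)\) faces. With \(N = n_X + n_A\) and \(D = d+1\) this gives \(O\!\left((n_X+n_A)^{\lceil (d+1)/2 \rceil}\right)\), and restricting to a subcomplex only decreases the count.
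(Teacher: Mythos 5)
Your overall architecture---an absolute Delaunay-\v{C}ech/\v{C}ech equivalence, transported to the relative setting by an embedding into \(\R{d+1}\), plus Seidel's bound for the size claim---correctly anticipates the paper's plan, and your final paragraph on the size bound is fine (you even write the exponent \(\lceil (d+1)/2\rceil\) correctly where the paper's typography garbles it). But both steps that carry the actual mathematical content are left as placeholders, so this is an outline rather than a proof. First, the absolute equivalence: you reduce everything to the claim that the inclusion of the alpha complex into \(\DC_r(Y)\) is a homotopy equivalence, to be ``established by an explicit deformation of \(\DC_r(Y)\) onto the alpha complex''---but no such deformation is given, and this is precisely the statement that previously required discrete Morse theory (the collapse result cited in the introduction as \cite{MR3605986}). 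The paper's elementary substitute is a genuinely different idea: it realizes the Delaunay-\v{C}ech complex as the Dowker nerve of a dissimilarity on \(X \times (\R{d}\times\R{d})\), whose balls \(B_{d^X}(x,t)\times B_{\Del^X}(x,t)\) are convex so that the functorial Nerve Lemma identifies \(|N\DC^X_t|\) with the thickening \((\DC^X)^t\), and then proves in \cref{linelemma}---via a hyperplane-separation argument about Voronoi cells---that the straight-line homotopy in the second coordinate deformation-retracts \((\DC^X)^t\) onto the diagonal copy of the union of balls \((d^X)^t\). Some concrete idea of this kind is what your ``explicit deformation'' must supply; as written, that step merely restates what is to be proven.

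Second, the embedding step, which you yourself flag as ``the step I expect to require the most care,'' is exactly where the remaining work lies and cannot be deferred. (Also note the paper's embedding is not yours: it places \(A\) at height \(+s\) and all of \(X\) at height \(-s\), with \(s\) required to exceed the largest alpha filtration values of both sets so that \(\del(A)\) and \(\del(X)\) map simplicially into \(\del(Z)\); nothing sits at height \(0\).) The paper's \cref{projectionequivalence} shows that the projection-induced map \(g \colon ND_t \to N\DC^{X_1,X_2}_t\) is a homotopy equivalence by proving that the preimage \(g^{-1}(\sigma)\) of every simplex is again a simplex, the key point being an intermediate value theorem argument in the vertical coordinate producing a common point of the Voronoi cells at both heights. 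This is also where your worry about ``spurious'' cross-height simplices is resolved: they exist, but since every simplex fiber of \(g\) is a simplex they do not change the homotopy type. Finally, the relative complex is not a vague ``subquotient'': it is defined as \(j_1(\del(A)) \cup ND_t\), and the quotient argument needs the observation that \(j_1(\del(A))\) is contractible, so that collapsing it computes the relative persistent homology. Until the two load-bearing steps above are performed rather than declared, the proposal does not establish the theorem.
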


This manuscript is structured as follows. In
\cref{sec:relativepersistence}, we introduce relative persistent
homology. \cref{sec:dowkernerves} introduces Dowker Nerves, the
theoretical foundation we use to prove that the relative Delaunay
\v{C}ech complex is homotopy equivalent to the relative \v{C}ech
complex. In \cref{sec:delaunaycech}, we introduce the alpha- and
Delaunay-\v{C}ech complexes using the Dowker Nerve notation and show
that they are homotopy equivalent to the \v{C}ech
complex. \cref{sec:reldelaunay} introduces
the relative alpha- and Delaunay-\v{C}ech complexes, and proves that
they are homotopy equivalent to the relative \v Cech complex.
Finally, in \cref{sec:computedelcech} we show how the relative
Delaunay-\v{C}ech complex can actually be computed.
  
\section{Relative Persistent Homology}
\label{sec:relativepersistence}

Let \(X\) be a finite subset of Euclidean space \(\R{d}\). Given
\(t > 0\), the \v Cech complex \(\Cech_t(X)\) of \(X\) is the abstract
simplicial complex with vertex set \(X\) and with
\(\sigma \subseteq X\) a simplex of \(\Cech_t(X)\) if and only if
there exists a point \(p \in \R{d}\) with distance less than \(t\) to
every point in \(\sigma\). Varying \(t\) we obtain the filtered \v
Cech complex \(\Cech(X)\).

Given a subset \(A\) of \(X\) we obtain an inclusion
\(\Cech(A) \subseteq \Cech(X)\) of filtered simplicial complexes and
an induced inclusion \(\Cech_*(A; k) \subseteq \Cech_*(X; k)\) of
associated chain complexes of persistence modules over the field
\(k\). The {\em relative persistent homology of the pair \((X,A)\)} is
defined as the homology of the factor chain complex of persistence
modules \(\Cech_*(X; k)/ \Cech_*(A; k)\).

For \(X\) of small cardinality, the relative persistent homology can
be calculated as 
the reduced persistent homology of the relative \v{C}ech complex
\(\Cech(X, A)\), where \(\sigma \subseteq X\) is a simplex of
\(\Cech(X, A)_t\) if either \(\sigma \subseteq A\) or
\(\sigma \in \Cech_t(X)\).  However, as the cardinality of \(X\) grows, this
quickly becomes computationally infeasible.

\section{Dowker Nerves}
\label{sec:dowkernerves}

A {\em dissimilarity} is a continuous function of the form
\(\Lambda \colon X \times Y \to [0,\infty]\), for topological spaces
\(X\) and \(Y\), where \([0,\infty]\) is given the order topology.  A
{\em morphism} \(f \colon \Lambda \to \Lambda'\) of dissimilarities
\(\Lambda \colon X \times Y \to [0,\infty]\) and
\(\Lambda' \colon X' \times Y' \to [0,\infty]\) consists of a pair
\((f_1,f_2)\) of continuous functions \(f_1 \colon X \to X'\) and
\(f_2 \colon Y \to Y'\) so that for all \((x,y) \in X \times Y\) the
following inequality holds:
\begin{displaymath}
  \Lambda'(f_1(x), f_2(y)) \le \Lambda(x,y).
\end{displaymath}
This notion of morphism is less general than the definition in for
example \cite[Definition 2.10]{SFN}, but it is simpler and suffices
for our purposes.

The {\em Dowker Nerve} \(N\Lambda\) of \(\Lambda\) is the filtered
simplicial complex described as follows: For \(t > 0\), the simplicial
complex \(N\Lambda_t\) consists of the finite subsets \(\sigma\) of \(X\) for
which there exists \(y \in Y\) so that \(\Lambda(x,y) < t\) for every
\(x \in \sigma\).

Let \(f \colon \Lambda \to \Lambda'\) be a morphism of
dissimilarities as above and let \(\sigma \in N\Lambda_t\). Given \(y
\in Y\) with \(\Lambda(x,y) < t\) for every \(x \in \sigma\) we see
that
\begin{displaymath}
  \Lambda'(f_1(x), f_2(y)) \le \Lambda(x,y) < t.
\end{displaymath}
for every \(x \in \sigma\), so \(f_1(\sigma) \in N\Lambda'_t\). Thus
we have a simplicial map \(f \colon N\Lambda \to N\Lambda'\).

Given \(x \in X\) and \(t > 0\), the \emph{\(\Lambda\)-ball of radius \(t\)
centered at \(x\)} is the subset of \(Y\) defined as
\begin{displaymath}
  B_{\Lambda}(x, t) = \{y \in Y,\ \mid \, \Lambda(x,y) < t\}.
\end{displaymath}
The {\em \(t\)-thickening of \(\Lambda\)} is the subset of \(Y\) defined as
\begin{displaymath}
  \Lambda^t = \bigcup_{x \in X} B_{\Lambda}(x, t).
\end{displaymath}
Note that by construction the set of \(\Lambda\)-balls of radius \(t\)
is an open cover of the \(t\)-thickening of \(\Lambda\).

The {\em geometric realization} \(|K|\) of a simplicial complex
\(K\) on the vertex set \(V\) is the subspace of the space \([0,1]^V\)
of functions \(\alpha 
\colon V \to [0,1]\) described as follows:
\begin{enumerate}
\item The subset \(\alpha^{-1}((0,1])\) of \(V\) 
consisting of elements where \(\alpha\) is
  strictly positive is a simplex in \(K\). In particular it is finite.
\item The sum of the values of \(\alpha\)
  is one, that is \(\sum_{v \in V} \alpha(v) = 1\). 
\end{enumerate}

The subspace topology on \(|K|\) is called the {\em strong topology} on the
geometric realization. It is 
convenient for construction of functions into \(|K|\). The {\em weak
  tooplogy} on \(|K|\), which we are not going to use here, is
convenient for construction of functions out of \(|K|\). The homotopy
type of \(|K|\) is the same for these two topologies
\cite[p. 355, Coorllary A.2.9]{Dold}.
Given a simplex \(\sigma \in K\), the simplex \(|\sigma|\) of \(|K|\)
is the closure of 
\[\{\alpha \colon V \to
[0,1]\, | \, \alpha(v) > 0 \text{ for all \(v \in \sigma\)}\}.\]
The simplices of \(|K|\) are the sets of this form. 

A {\em partition of unity subordinate to the dissimilarity
  \(\Lambda \colon X \times Y \to [0,\infty]\)} consists of continuous
maps
\(\varphi^t \colon \Lambda^t \to |N\Lambda_t|\) such that given
\(x \in X\), the closure of the set
\begin{displaymath}
  \{y \in Y \, \mid \, \varphi^t(y)(x) > 0\}
\end{displaymath}
is contained in \(B_{\Lambda}(x,t)\).  We say that \(\Lambda\) is {\em
  numerable} if a partition of unity subordinate to \(\Lambda\)
exists. If \(Y\) is paracompact, then every dissimilarity of the form
\(\Lambda \colon X \times Y \to [0, \infty]\) is numerable
\cite[p. 355, paragraph after Definition A.2.10]{Dold}.

Let \(y \in \Lambda^t\) and let \(\{\varphi^t \colon \Lambda^t \to
|N\Lambda_t|\}\) be a partition 
of unity subordinate to \(\Lambda\). If \(x \in X\) with
\(\varphi^t(y)(x) > 0\), then \(\Lambda(x,y) < t\). Therefore
\(\varphi^t(y)\) is 
contained in a simplex \(|\sigma|\) in \(|N\Lambda_t|\) with
\(\sigma\) contained 
in \(\{x \in X \, \mid \, \Lambda(x,y) < t\}\). Every
finite subset of this set is 
an element of \(N \Lambda_t\).
This implies that for \(s \le t\) there is a
simplex of \(|N\Lambda_t|\) containing both \(\varphi^s(y)\) and
\(\varphi^t(y)\). It also implies that given another partition of
unity \(\{\psi^t \colon \Lambda^t \to |N\Lambda_t|\}\) subordinate to
\(\Lambda\) there is a simplex of \(|N\Lambda_t|\) containing both
\(\varphi^t(y)\) and \(\psi^t(y)\). This is exactly the definition of
contiguous maps, so \(\varphi^t\) and \(\psi^t\) are contiguous, and
thus homotopic maps \cite[Remark 2.22, p. 350]{Dold}.  Similarly, the
diagram
\begin{displaymath}
  \begin{CD}
    \Lambda^s @>{\varphi^s}>> |N\Lambda_s| \\
    @VVV @VVV\\
    \Lambda^t @>{\varphi^t}>> |N\Lambda_t|
  \end{CD}
\end{displaymath}
commutes up to homotopy \cite[paragraph on the nerve starting on page
355 and ending on page 356]{Dold}.

Recall that a cover \(\mathcal U\) of \(Y\) is {\em good} if all
non-empty finite intersections of members of \(\mathcal U\) are
contractible. We now state the Nerve Lemma in the context of dissimilarities.
\begin{theorem} \label{thm:partitionofunity} 
  If \(Y\) is paracompact, then there exists a partition of unity
  \(\{\varphi^t\colon \Lambda^t \to |N\Lambda_t|\}\) subordinate to every dissimilarity
  \(\Lambda \colon X \times Y \to [0,\infty]\).  Moreover, if the
  cover of \(\Lambda^t\) by \(\Lambda\)-balls of radius \(t\) is a
  good cover, then \(\varphi^t\) is a homotopy equivalence.
\end{theorem}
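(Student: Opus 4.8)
The plan is to prove Theorem~\ref{thm:partitionofunity} in two parts, corresponding to its two assertions: the existence of a subordinate partition of unity when $Y$ is paracompact, and the fact that under the good-cover hypothesis such a $\varphi^t$ is a homotopy equivalence. The first part is essentially a citation to the classical theory of numerable covers: since the $\Lambda$-balls $B_{\Lambda}(x,t)$ form an open cover of $\Lambda^t$ and $\Lambda^t$ is paracompact as a subspace of the paracompact space $Y$, a partition of unity subordinate to this cover exists. I would construct $\varphi^t$ directly from such a partition of unity $\{\rho_x\}_{x \in X}$ by setting $\varphi^t(y)(x) = \rho_x(y)$, normalized so the values sum to one. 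The support condition on $\rho_x$, namely that the closure of $\{y : \rho_x(y) > 0\}$ lies in $B_{\Lambda}(x,t)$, is exactly the subordination condition demanded in the definition, and the earlier discussion in the excerpt already verifies that such a $\varphi^t(y)$ lands in a genuine simplex of $|N\Lambda_t|$.

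The second part is the substantive Nerve Lemma content. The hypothesis is that the cover of $\Lambda^t$ by the balls $B_{\Lambda}(x,t)$ is good, i.e.\ all nonempty finite intersections are contractible. The natural approach is to invoke the Nerve Theorem for numerable good covers: the nerve of the cover $\{B_{\Lambda}(x,t)\}_{x \in X}$ is precisely the simplicial complex $N\Lambda_t$, since a finite set $\sigma \subseteq X$ indexes a nonempty intersection $\bigcap_{x \in \sigma} B_{\Lambda}(x,t)$ exactly when there is a $y \in Y$ with $\Lambda(x,y) < t$ for all $x \in \sigma$, which is the defining condition for $\sigma \in N\Lambda_t$. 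Thus the map $\varphi^t \colon \Lambda^t \to |N\Lambda_t|$ is a map from the covered space to the geometric realization of its nerve, and the classical Nerve Theorem for numerable good covers (as in \cite[p.~355--356]{Dold}) asserts that this canonical map is a homotopy equivalence.

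The main obstacle I anticipate is ensuring that the map $\varphi^t$ constructed from the partition of unity genuinely coincides (at least up to the homotopy supplied by the contiguity argument already given in the excerpt) with the canonical nerve map appearing in Dold's formulation of the Nerve Theorem, so that the homotopy-equivalence conclusion transfers. Since the earlier text has already shown that any two partitions of unity subordinate to $\Lambda$ yield contiguous, hence homotopic, maps, it suffices to exhibit one subordinate partition of unity whose associated map is the standard nerve map; Dold's construction does exactly this. I would therefore close the argument by citing that the map induced by a partition of unity subordinate to a numerable good cover is a homotopy equivalence, and noting that all such maps are homotopic by the contiguity observation, so the statement holds for the particular $\varphi^t$ at hand.
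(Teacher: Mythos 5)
Your overall route coincides with the paper's: the existence half is delegated to the classical theory of numerable covers and partitions of unity, the good-cover half is delegated to a known nerve theorem, and the contiguity observation (which the paper develops in the discussion immediately preceding the theorem) guarantees that the conclusion is independent of which subordinate partition of unity one uses. Your construction \(\varphi^t(y)(x) = \rho_x(y)\) is exactly right: point-finiteness, the simplex condition, and the support condition all hold for the reasons you indicate. However, two of your supporting claims do not hold up, and one of them is precisely the delicate point of the statement.

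First, paracompactness is not hereditary, not even to open subspaces: \([0,\omega_1)\) is an open subspace of the compact Hausdorff (hence paracompact) space \([0,\omega_1]\), yet it is not paracompact, and \(\Lambda^t\) is exactly an open subspace of \(Y\). So the step ``\(\Lambda^t\) is paracompact as a subspace of the paracompact space \(Y\)'' fails for general paracompact \(Y\); one can even realize such a configuration by a dissimilarity (take \(X\) the countable ordinals with the discrete topology and \(\Lambda(\alpha,y)=0\) for \(y \le \alpha\), \(\infty\) otherwise), so the gap is not merely cosmetic. The claim is valid when \(Y\) is metrizable, since subspaces of metrizable spaces are paracompact, and that covers every use in the paper (\(Y = \R{d}\) or \(\R{d}\times\R{d}\)); the paper itself sidesteps the issue by quoting directly from \cite{Dold} the statement that every dissimilarity with paracompact \(Y\) is numerable, rather than arguing via subspaces. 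Second, the pages of \cite{Dold} you cite for the nerve theorem are used in the paper only for the construction of the maps \(\varphi^t\), the homotopy commutativity of the comparison squares, and uniqueness up to contiguity; they do not establish that \(\varphi^t\) is a homotopy equivalence when the \(\Lambda\)-balls form a good cover. That assertion is exactly what the paper imports from \cite[Theorem 4.3]{virk2019rips}, so your second half should cite that result (or another explicit nerve theorem for numerable good covers) in place of \cite{Dold}. With those two repairs---restricting to metrizable \(Y\) or citing the numerability statement directly, and swapping the nerve-theorem reference---your identification of \(N\Lambda_t\) with the nerve of the ball cover and the contiguity transfer are correct and agree with the paper's argument.
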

\begin{proof}
  By the above discussion, we only need to note that the last statement
  about good covers is \cite[Theorem 4.3]{virk2019rips}.
\end{proof}
A functorial version of the Nerve Lemma can be stated as follows:
\begin{proposition} \label{prop:realizationmorphismcommute}
  Let \(\Lambda \colon X \times Y \to [0,\infty]\) and 
  \(\Lambda' \colon X' \times Y' \to [0,\infty]\) be dissimilarities
  and let \(f = f_1 \times f_2 \colon X \times Y \to X' \times Y'\) be
  a morphism \(f \colon \Lambda \to \Lambda'\) of dissimilarities. If  
  \(\{\varphi^t\colon \Lambda^t \to |N\Lambda_t|\}\)
  is a partition of
  unity  subordinate to \(\Lambda\) and 
  \(\{\psi^t\colon (\Lambda')^t \to |N\Lambda'_t|\}\)
  is a partition of
  unity  subordinate to \(\Lambda'\), then for every \(t \ge 0\) the diagram
  \begin{displaymath}
    \begin{CD}
      \Lambda^t @>{\varphi^t}>> |N\Lambda_t| \\
      @V{f_2}VV @VV{|f_1|}V\\
      (\Lambda')^t @>{\psi^t}>> |N\Lambda'_t|,
    \end{CD}    
  \end{displaymath}
  commutes up to homotopy.
\end{proposition}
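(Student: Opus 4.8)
The plan is to prove the proposition by showing that the two composite maps $\psi^t \circ f_2$ and $|f_1| \circ \varphi^t$ from $\Lambda^t$ to $|N\Lambda'_t|$ are contiguous, and therefore homotopic by the same reasoning (via \cite[Remark 2.22, p. 350]{Dold}) already used above for a single dissimilarity. First I would confirm that the square is even defined: if $y \in \Lambda^t$ then there is some $x \in X$ with $\Lambda(x,y) < t$, and the morphism inequality gives $\Lambda'(f_1(x), f_2(y)) \le \Lambda(x,y) < t$, so $f_2(y) \in B_{\Lambda'}(f_1(x), t) \subseteq (\Lambda')^t$. Thus $f_2$ restricts to a map $\Lambda^t \to (\Lambda')^t$ and all four maps in the diagram make sense.

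Fix $y \in \Lambda^t$. I would analyze the supports of the two points $\psi^t(f_2(y))$ and $|f_1|(\varphi^t(y))$ in $|N\Lambda'_t|$, where by the support of a function I mean the set of vertices on which it is strictly positive. Because $\psi^t$ is subordinate to $\Lambda'$, the support of $\psi^t(f_2(y))$ is contained in $\{x' \in X' \mid \Lambda'(x', f_2(y)) < t\}$. On the other side, subordination of $\varphi^t$ puts the support of $\varphi^t(y)$ inside $\{x \in X \mid \Lambda(x,y) < t\}$, and since $|f_1|$ sends a function to the one whose support is the $f_1$-image of the original support, the support of $|f_1|(\varphi^t(y))$ is contained in $\{f_1(x) \mid \Lambda(x,y) < t\}$.

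The key step is then to see that both supports lie inside the single set $S = \{x' \in X' \mid \Lambda'(x', f_2(y)) < t\}$. For $\psi^t(f_2(y))$ this is immediate. For $|f_1|(\varphi^t(y))$ the morphism inequality again yields $\Lambda'(f_1(x), f_2(y)) \le \Lambda(x,y) < t$ whenever $\Lambda(x,y) < t$, so every vertex $f_1(x)$ in its support lies in $S$. Hence the union of the two supports is a finite subset of $S$, and because the single point $f_2(y) \in Y'$ witnesses $\Lambda'(x', f_2(y)) < t$ for every $x' \in S$, every finite subset of $S$, this union included, is a simplex of $N\Lambda'_t$. Consequently $\psi^t(f_2(y))$ and $|f_1|(\varphi^t(y))$ both lie in one simplex of $|N\Lambda'_t|$, which is exactly the contiguity condition.

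As $y \in \Lambda^t$ was arbitrary, the two composites are contiguous, hence homotopic, so the square commutes up to homotopy. The step I expect to require the most care is the bookkeeping of supports under $|f_1|$: one must be sure that realizing the simplicial map does not enlarge a support beyond the $f_1$-image of the original, and that $f_2(y)$ is the correct common witness for the combined simplex. Both points are handled by the morphism inequality, which is the only computational ingredient in the argument.
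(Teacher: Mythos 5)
Your proof is correct and follows essentially the same route as the paper: both arguments establish contiguity of the two composites by showing that the supports of \(|f_1|(\varphi^t(y))\) and \(\psi^t(f_2(y))\) lie in the common set \(\{x' \in X' \mid \Lambda'(x', f_2(y)) < t\}\), every finite subset of which is a simplex of \(N\Lambda'_t\) witnessed by \(f_2(y)\). The only additions beyond the paper's proof are your explicit check that \(f_2\) restricts to a map \(\Lambda^t \to (\Lambda')^t\) and the verification that \(|f_1|\) does not enlarge supports, both of which are sound and make the argument more self-contained.
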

\begin{proof}
  We show that the two compositions are contiguous. Recall that
  \(|f_1|\) takes a point \(\alpha \colon X \to [0,1]\) of
  \(|N\Lambda_t|\) to the point \(|f_1|(\alpha)\) of \(|N\Lambda'_t|\)
  with \(|f_1|(\alpha)(x') = \sum_{f_1(x) = x'} \alpha(x)\).  Recall
  further that \(\varphi^t(y)\) 
  is contained in a simplex \(|\sigma|\) in \(|N\Lambda_t|\), where
  \(\sigma\) is contained 
  in \(\{x \in X \, \mid \, \Lambda(x,y) < t\}\).
  Then we have that for 
  \(y \in \Lambda^t\), the elements \(|f_1|(\varphi^t(y))\) and
  \(\psi^t(f_2(y))\) of \(|N\Lambda'_t|\) are contained in simplices
  \(|\sigma|\) and \(|\tau|\) respectively. Both \(\sigma\) and
  \(\tau\) are subsets of the set
  \(\{x' \in X' \, \mid \, \Lambda'(x',f_2(y) ) < t\}\). However
  every finite subset of this set is a simplex in \(N\Lambda'_t\). In
  particular, so is the union \(\sigma \cup \tau\).
\end{proof}

\section{The Alpha- and Delaunay \v Cech Complexes}
\label{sec:delaunaycech}

Given a finite subset \(X\) of \(\R{d}\) we define the
Voronoi cell of \(x \in X\) as 
\begin{displaymath}
  \Vor(X,x) = \{p \in \R{d} \, \mid \, d(x,p) \le d(y, p) \text{ for all
    \(y \in X\)}\}.
\end{displaymath}

Let \(\R{d}_d\) be Euclidean space with the discrete
topology.
The {\em discrete Delaunay dissimilarity} of \(X\) is defined as
\begin{displaymath}
  \del^X \colon X \times \R{d}_d \to [0, \infty], \quad
  \del^X(x,p) =
  \begin{cases}
    0 & \text{if \(p \in V(X,x)\)} \\
    \infty & \text{if \(p \notin V(X,x)\)}.
  \end{cases}
\end{displaymath}
The {\em Delaunay complex \(\Del(X)\)} is the simplicial
complex with vertex set \(X\) and with \(\sigma \subseteq X\) a
simplex of \(\Del(X)\) if and only if there exists a point in
\(\R{d}\) belonging to \(\Vor(X,x)\) for every
\(x \in \sigma\). That is, \(\Del(X) = N\del^X_t\) for \(t > 0\).

Note that with respect to Euclidean topology, the discrete
Delaunay dissimilarity is not continuous, and hence \(\del^X \colon X
\times \R{d} \to [0, \infty]\) is not a
dissimilarity.  One way to deal with this is to use the Nerve Lemma
for absolute neighbourhood retracts \cite[Theorem 8.2.1]{MR387495}. In
order to use \cref{thm:partitionofunity} and
\cref{prop:realizationmorphismcommute} from above, instead we
construct a continuous version of the Delaunay dissimilarity.

Given a subset \(\sigma\) of \(X\) and \(p \in \R{d}\), let
\begin{displaymath}
  d_{\Vor}(p, \sigma) = \max \{d(p, \Vor(X,x)) \, \mid \, x \in \sigma\}, 
\end{displaymath}
where for any \(A \subseteq \R{d}\), we define
\(d(p, A) = \inf_{a \in A} \{d(p, a) \}\).

Note that if \(\sigma \notin \Del(X)\), then the infimum
\(\varepsilon_\sigma\) of the continuous function
\(d_{\Vor}(-,\sigma) \colon \R{d} \to \R{}\) is strictly positive.
Choose \(\varepsilon > 0\) so that
\(2\varepsilon < \varepsilon_{\sigma}\) for every subset \(\sigma\) of
\(X\) that is not in \(\Del(X)\). Given \(x \in X\) we define the
\(\varepsilon\)-thickened Voronoi cell \(\Vor(X,x)^\varepsilon\) by
\begin{displaymath}
  \Vor(X,x)^\varepsilon = \{p \in \R{d} \, \mid \, d(p, \Vor(X,x)) <
  \varepsilon\}. 
\end{displaymath}
By construction the nerve of the open cover
\((\Vor(X,x)^\varepsilon)_{x \in X}\) of \(\R{d}\) is equal to
\(\Del(X)\).

Let \(h \colon [0, \infty] \to [0, \infty]\) be the order preserving map
\begin{equation}  \label{voronoithickening}
  h(t) =
  \begin{cases}
    -\ln (1 - t / \varepsilon) & \text{if \(t < \varepsilon\)} \\
    \infty & \text{if \(t \ge \varepsilon\)}.
  \end{cases}
\end{equation}
For each \(x \in X\) we let \(\Del_x \colon \R{d} \to [0, \infty]\) be
the function defined by \(\Del_x(p) = h(d(p, \Vor(X,x)))\) so that
\(\Del_x(\Vor(X,x)) = 0\) and
\(\Del_x(\R{d} \setminus \Vor(X,x)^\varepsilon) = \infty\).

The Delaunay dissimilarity of \(X\) is defined as
\begin{displaymath}
  \Del^X \colon X \times \R{d} \to [0, \infty], \quad
  \Del^X(x,p) = \Del_x(p).
\end{displaymath}
By the above discussion we know that \(N\Del^X_t = N\del^X_t = \Del(X)\) whenever
\(t > 0\).

The \v Cech dissimilarity of \(X\) is defined as
\begin{displaymath}
  d^X \colon X \times \R{d} \to [0, \infty],
\end{displaymath}
where \(d^X(x,p)\) is the Euclidean distance between \(x \in X\) and
\(p \in \R{d}\).

The alpha dissimilarity of \(X\) is defined as
\begin{displaymath}
  A^X = \max(\Del^X, d^X) \colon X \times \R{d} \to [0,\infty].
\end{displaymath}

The Delaunay \v Cech dissimilarity is defined as
\begin{displaymath}
  \DC^X \colon X \times \left(\R{d} \times \R{d}\right) \to [0, \infty], \quad
  \DC^X(x,(p,q)) = \max(d^X(x,p), \Del^X(x,q)).
\end{displaymath}

Note the nerve of the dissimilarity 
\begin{displaymath}
  \dC^X \colon X \times \left(\R{d} \times \R{d}_d\right) \to [0, \infty], \quad
  \dC^X(x,(p,q)) = \max(d(^Xx,p), \del^X(x,q))
\end{displaymath}
is identical to the nerve of \(\DC^X\). Moreover, the Dowker nerves of
the Delaunay-, \v Cech-, alpha- and Delaunay \v Cech dissimilarities
are the Delaunay-, \v Cech-, alpha- and Delaunay \v Cech complexes
respectively. For all these dissimilarities, the corresponding balls are convex, so
the geometric realizations are homotopy equivalent to the
corresponding thickenings. In order to see that the morphism
\(A^X \to d^X\) of dissimilarities induces homotopy equivalences
\(|NA^X_t| \xrightarrow \simeq |Nd^X_t|\) it suffices to note that the
corresponding map
\((A^X)^t \to (d^X)^t\)
is the identity map. This holds because
\(B_{A^X}(x,t) = B_{d^X}(x,t) \cap B_{\Del^X}(x,t)\) and given
\(y \in B_{d^X}(x,t)\) we have that
\(y \in \Vor(X,x')\) for some \(x'\in X\)
with \(d^X(y,x')\) minimal, so \(d^X(y,x') \le d^X(y,x) < t\) and
\(y \in B_{d^X}(x',t) \cap B_{\Del^X}(x',t)\).

In order to see that the morphism
\(\DC^X \to d^X\) of dissimilarities induces homotopy  
equivalences \(|N\DC^X_t|
\xrightarrow \simeq |Nd^X_t|\) we use the
following lemma: 
\begin{lemma}
\label{linelemma}
  For every \((p,q) \in (\DC^X)^t\), the entire line segment between
  \((p,p)\) and \((p,q)\) is contained in \((\DC^X)^t\).
\end{lemma}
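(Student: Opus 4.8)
The plan is to exploit that the first coordinate is \emph{constant} along the segment between \((p,p)\) and \((p,q)\), so membership in \((\DC^X)^t\) is governed entirely by the slice over \(p\). Writing the segment as \(\{(p,q_\lambda)\mid\lambda\in[0,1]\}\) with \(q_\lambda=(1-\lambda)p+\lambda q\), and recalling \(B_{\DC^X}(x',t)=B_{d^X}(x',t)\times B_{\Del^X}(x',t)\), the point \((p,q_\lambda)\) lies in \((\DC^X)^t\) exactly when \(q_\lambda\) lies in
\[
U_p \;=\; \bigcup_{x'\in W}\Vor(X,x')^{\delta},\qquad W=\{x'\in X\mid \norm{x'-p}<t\},
\]
because \(\DC^X(x',(p,q_\lambda))<t\) requires both \(\norm{x'-p}<t\) and \(q_\lambda\in B_{\Del^X}(x',t)=\Vor(X,x')^{\delta}\), where \(\delta=\varepsilon(1-e^{-t})>0\) since \(h\) is increasing with \(h(s)<t\iff s<\varepsilon(1-e^{-t})\). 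Let \(x\in X\) witness the hypothesis \((p,q)\in(\DC^X)^t\); then \(q\in U_p\), and taking a nearest site \(x_p\) of \(p\) (so \(p\in\Vor(X,x_p)\) and \(\norm{x_p-p}=d(p,X)\le\norm{x-p}<t\)) also gives \(p\in U_p\). It therefore suffices to prove that \(U_p\) is star-shaped with respect to \(p\), for then the whole segment \([p,q]\) lies in \(U_p\).

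The geometric heart is to show first that the un-thickened set \(V_p=\bigcup_{x'\in W}\Vor(X,x')\) is star-shaped about \(p\). Let \(z\in\Vor(X,x_1)\) with \(x_1\in W\), fix \(s\in[0,1)\), and let \(w=(1-s)p+sz\) lie in \(\Vor(X,x_2)\); the goal is \(x_2\in W\). Expanding the defining Voronoi inequalities \(\norm{w-x_2}\le\norm{w-x_1}\) and \(\norm{z-x_1}\le\norm{z-x_2}\) and chaining them yields \(\langle z-w,\,x_2-x_1\rangle\le0\), and since \(z-w=(1-s)(z-p)\) this gives \(\langle z-p,\,x_2-x_1\rangle\le0\). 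Substituting the first inequality into the expansion of \(\norm{x_2-p}^2-\norm{x_1-p}^2\) then produces \(\norm{x_2-p}^2-\norm{x_1-p}^2\le 2s\,\langle z-p,\,x_2-x_1\rangle\le0\), so \(\norm{x_2-p}\le\norm{x_1-p}<t\) and hence \(x_2\in W\) and \(w\in V_p\). Intuitively: as one slides from \(z\) toward \(p\), the distance from \(p\) to the nearest site of the cell one passes through can only decrease.

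Second, I would pass from \(V_p\) to \(U_p\), observing that \(U_p\) is exactly the open \(\delta\)-neighborhood of \(V_p\) (using \(d(z,\bigcup_{x'}\Vor(X,x'))=\min_{x'\in W}d(z,\Vor(X,x'))\) over the finite set \(W\)). Star-shapedness about \(p\) is preserved under open neighborhoods: if \(z\in U_p\) is within \(\delta\) of \(z_0\in V_p\), then for \(s\in[0,1]\) the contracted point \(w_0=(1-s)p+sz_0\) lies in \(V_p\) by the previous step, while \(\norm{w-w_0}=s\,\norm{z-z_0}<\delta\) shows the contracted point \(w=(1-s)p+sz\) lies in \(U_p\); the point is that contracting toward \(p\) is \(s\)-Lipschitz with \(s\le1\). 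Thus \(U_p\) is star-shaped about \(p\), and since \(p,q\in U_p\) we get \([p,q]\subseteq U_p\), i.e.\ \((p,q_\lambda)\in(\DC^X)^t\) for all \(\lambda\), which is the claim.

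I expect the second step---the star-shapedness of \(V_p\)---to be the main obstacle, since it is the only place where the convex geometry of the Voronoi diagram is genuinely used; the identification of the slice and the thickening argument are formal and follow the pattern already employed for the morphism \(A^X\to d^X\) in the text.
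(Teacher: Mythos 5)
Your proof is correct, and at its core it uses the same geometry as the paper's proof, but it reaches that core by a different route and packages it differently, so a comparison is worth spelling out. The paper fixes the witness \(x\) for \((p,q)\in(\DC^X)^t\), approximates \(q\) by a point \(q'\in\Vor(X,x)\) with \(d(q,q')<h^{\leftarrow}(t)\), and argues directly along the segment toward \(q'\): if \((1-s)p+sq'\) lies in \(\Vor(X,y)\), then \(p\) lies on the \(y\)-side of the bisector hyperplane \(H\) between \(x\) and \(y\), because the segment from \(p\) to \(q'\) meets \(H\) at most once while \(q'\) lies on the \(x\)-side; hence \(d(y,p)\le d(x,p)<t\). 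Your inner-product chaining---\(\langle z-p,\,x_2-x_1\rangle\le 0\), hence \(\norm{x_2-p}\le\norm{x_1-p}\)---is an algebraic rendering of exactly that one-crossing argument: both amount to the sign pattern of the affine function \(v\mapsto\norm{v-x_2}^2-\norm{v-x_1}^2\) along the line through \(p\), \(w\), \(z\). Likewise, your observation that contraction toward \(p\) is \(s\)-Lipschitz is the same device the paper uses to pass from the segment toward \(q'\) back to the segment toward \(q\). What is genuinely different is the organization. You isolate the slice \(U_p\) over \(p\), prove that the unthickened union \(V_p\) of Voronoi cells of near sites is star-shaped about \(p\), and then invoke the general principle that open \(\delta\)-neighborhoods preserve star-shapedness about a fixed center; the paper instead interleaves the approximation and the Voronoi geometry in a single pass along one segment and never names the star-shaped set. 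Your decomposition is more modular and slightly stronger in form: it handles all admissible \(q\) at once, and it adapts immediately to \cref{relativelinelemma}, since a union of sets each star-shaped about \(p\) is again star-shaped about \(p\). The cost is the extra bookkeeping of \(W\), \(V_p\), \(U_p\) and the nearest-site argument for \(p\in U_p\), which the paper gets for free by running its hyperplane argument at \(s=0\). Both proofs are elementary and complete; yours has no gap.
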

\begin{proof}
  In order not to clutter notation we omit superscript \(X\) on
  dissimilarities. 
  Let \(\gamma \colon [0,1] \to \R{d}\) be the function \(\gamma(s) =
  \left(p, (1-s)p + sq\right) \).
  We claim that given \((p,q) \in \DC^t\) and \(s \in [0,1]\) the point
  \((p, \gamma(s)) = \left(p, (1-s)p + sq\right)\) is in \(\DC^t\).

  If \((p, q) \in \DC^t\), there exists a point \(x \in X\), such that
  \(p \in B_d(x, t)\) and \(q \in B_{\Del}(x, t)\), that is,
  \(d(q, \Vor(X,x)) < h^{\leftarrow}(t)\), where \(h^{\leftarrow}\) is
  the generalized inverse of \(h\). Pick \(q' \in \Vor(X, x)\) so that
  \(d(q,q') < h^{\leftarrow}(t)\).  Let
  \(\gamma' \colon [0,1] \to \R{d}\) be the function
  \(\gamma'(s) = \left(p, (1-s)p + sq'\right) \).  Given
  \(s \in [0,1]\), suppose that the point
  \((p, \gamma'(s)) = \left(p, (1-s)p + sq'\right)\) is in
  \(\dC^t\). Then \(\gamma(s)\) is in \(\DC^t\) since the distance
  between \((1-s)p + sq\) and \((1-s)p + sq'\) is less than
  \(h^{\leftarrow}(t)\).
  
  We are left to show that, 
  given \(s \in [0,1]\), the point
  \((p, \gamma'(s)) = \left(p, (1-s)p + sq'\right)\) is in
  \(\dC^t\). 
  Suppose
  \(\gamma'(s) \in \Vor(X, y)\) for some \(s \in [0, 1)\) and some
  \(y \in X\). We claim that then \(p \in B_d(y, t)\).
  To see this, we may without loss of generality assume that \(y \ne
  x\).
  Let 
  \(H\) be the hyperplane in between \(x\) and \(y\), i.e.
  \[H = \{z \in X \mid d(x, z) = d(y, z)\}.\]  
  Let 
  \[H_+ = \{z \in X \mid d(x, z) \ge d(y, z)\}\]
  and   
  \[H_- = \{z \in X \mid d(x, z) \le d(y, z)\}.\]
  Since \(\gamma'(s) \in \Vor(X,y)\) we have \(\gamma'(s) \in H_+\). Since
  \(q \in \Vor(X,x)\) we have \(q \in H_-\).
  Since the line segment between \(p\) and \(q\) either is contained in
  \(H\) or intersects \(H\) at
  most once we must have \(p \in H_+\).
  That is, \(d(y,p) \le d(x,p) < t\), so \(p \in B_d(y,t)\) as claimed.
\end{proof}
By \cref{linelemma}, the inclusion
\begin{displaymath}
  (d^X)^t = \cup_{x \in X} B_{d^X}(x,t) \to \cup_{x \in X} B_{\DC^X}(x,t) =
  (\DC^X)^t, \quad p
  \mapsto (p,p)
\end{displaymath}
is a deformation retract. In particular it is a homotopy equivalence.

\section{The Relative Delaunay \v{C}ech Complex}
\label{sec:reldelaunay}

In this section we consider two subsets \(X_1\) and \(X_2\) of
d-dimensional Euclidean space \(\R{d}\).

The {\em Voronoi diagram} of a finite subset \(X\) of \(\R{d}\) is the
set of pairs of the form \((x, \Vor(X,x))\) for \(x \in X\), that is,
\begin{displaymath}
  \Vor(X) = \{(x, \Vor(X,x)) \, \mid \, x \in X\}.
\end{displaymath}
This may seem overly formal since the projection on the first factor
gives a bijection \(\Vor(X) \to X\). However, when we work with
Voronoi cells with respect to different subsets \(X_1\) and \(X_2\) of
\(\R{d}\) it may 
happen that \(\Vor(X_1,x_1) = \Vor(X_2, x_2)\) even when \(x_1 \ne
x_2\). 
The {\em Voronoi diagram} of the pair of subsets \(X_1\) and \(X_2\) of
\(\R{d}\) is the set
\begin{displaymath}
  \Vor(X_1, X_2) = \Vor(X_1) \cup
  \Vor(X_2).
\end{displaymath}
The {\em discrete Delaunay dissimilarity} of \(X_1\) and \(X_2\) is
defined as
\begin{displaymath}
  \del^{X_1,X_2} \colon \Vor(X_1, X_2) \times \R{d}_d \to [0, \infty],
  \qquad 
  \del^{X_1,X_2}((x,V), p) =
  \begin{cases}
    0 & \text{if \(p \in V\)} \\
    \infty & \text{if \(p \notin V\).}
  \end{cases}
\end{displaymath}
The simplicial
complex \(N\del_t^{X_1,X_2}\) is independent of \(t > 0\). It is 
the {\em Delaunay complex} \(\Del(X_1, X_2)\) on \(X_1\) and
\(X_2\). In order to  
describe the homotopy type of this simplicial complex we thicken the
Voronoi cells like we did in the previous section:

Given a subset \(\sigma\) of \(\Vor(X_1,X_2)\) and \(p \in \R{d}\), let
\begin{displaymath}
  d_{\Vor}(p, \sigma) = \max \{d(p,V) \, \mid \, (x,V) \in \sigma\}.
\end{displaymath}
Note that if \(\sigma \notin \Del(X_1, X_2)\), then the infimum
\(\varepsilon_{\sigma}\) of the continuous function \(d_{\Vor}(-, \sigma)
\colon \R{d} \to \R{}\) is strictly positive. Choose \(\varepsilon >
0\) so that \(2 \varepsilon < \varepsilon_{\sigma}\) for every subset
\(\sigma\) of \(\Vor(X_1,X_2)\) that is not in \(\Del(X_1,
X_2)\). Given \((x,V) \in \Vor(X_1,X_2)\) we define the
\(\varepsilon\)-thickening \(V^{\varepsilon}\) of \(V\) by
\begin{displaymath}
  V^{\varepsilon} = \{p \in \R{d} \, \mid \, d(p, V) < \varepsilon\}.
\end{displaymath}
By construction, the nerve of the open cover
\(((x,V^{\varepsilon}))_{(x,V) \in \Vor(X_1, X_2)}\) is equal to
\(\Del(X_1,X_2)\). 
The Delaunay dissimilarity \(\Del^{X_1, X_2}\) of \(X_1\) and \(X_2\)
is defined as
\begin{displaymath}
   \Vor(X_1, X_2) \times \R{d} \xrightarrow{\Del^{X_1,X_2}} [0, \infty], \qquad
  \Del^{X_1,X_2}((x, V),
  p) = h(d(p, V))
\end{displaymath}
for \(h \colon [0,\infty] \to [0, \infty]\) the order preserving map
defined in the previous section.

The inclusion \(X_1 \to \Vor(X_1, X_2)\) taking \(x \in X_1\) to
\((x, \Vor(x, X_1))\) induces a morphism of dissimilarities
\(\Del^{X_1} \to \Del^{X_1, X_2}\) and an inclusion of nerves
\(N\Del_t^{X_1} \subseteq N\Del_t^{X_1, X_2}\) for \(t > 0\).

Next, we construct the dissimilarity \(A^{X_1, X_2}\) as
\begin{displaymath}
  \Vor(X_1, X_2) \times
  \R{d} \xrightarrow{A^{X_1,X_2}} [0,\infty], \qquad ((x,V), p)
  \mapsto \max(d(x,p), \Del^{X_1, X_2}((x, V), p)).
\end{displaymath}
Also here we have an obvious inclusion \(NA^{X_1}_t \to NA^{X_1, X_2}_t\),
and the \(A^{X_1, X_2}\)-balls are convex so the nerve lemma yields a
homotopy equivalence
\begin{displaymath}
  |NA^{X_1,X_2}_t| \simeq \bigcup_{(x, V) \in \Vor(X_1, X_2)}
  B_{A^{X_1,X_2}}((x, V), t) = \bigcup_{x \in X_1 \cup X_2} B_{d^{X_1
      \cup X_2}}(x, t) = (X_1 \cup X_2)^t.
\end{displaymath}

Finally, we construct the dissimilarity \(\DC^{X_1, X_2}\)
\begin{align*}
  \Vor(X_1, X_2) \times
  (\R{d} \times \R{d}) &\xrightarrow{\DC^{X_1,X_2}} [0,\infty], \\
  ((x,V), (p, q))
  &\mapsto \max(d(x,p), \Del^{X_1, X_2}((x, V), q)) 
\end{align*}
Here again we have an obvious inclusion \(N\DC^{X_1}_t \to N\DC^{X_1, X_2}_t\),
and the \(\DC^{X_1, X_2}\)-balls are convex so the nerve lemma yields a
homotopy equivalence
\begin{displaymath}
  |N\DC^{X_1,X_2}_t| \simeq 
  (\DC^{X_1,X_2})^t 
\end{displaymath}
The following variant of \cref{linelemma} implies that 
\((\DC^{X_1,X_2})^t \)
is a deformation retract of
\((X_1 \cup X_2)^t\). 
\begin{lemma}
\label{relativelinelemma}
  For every \((p,q) \in (\DC^{X_1, X_2})^t\), the entire line segment between
  \((p,p)\) and \((p,q)\) is contained in \((\DC^{X_1, X_2})^t\).
\end{lemma}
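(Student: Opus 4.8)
The plan is to follow the proof of \cref{linelemma} almost verbatim, the one new ingredient being careful bookkeeping of which of the two Voronoi diagrams we work in. First I would fix \((p,q) \in (\DC^{X_1,X_2})^t\) and unpack the definition: there is a vertex \((x,V) \in \Vor(X_1,X_2)\) with \(d(x,p) < t\) and \(\Del^{X_1,X_2}((x,V),q) = h(d(q,V)) < t\), i.e. \(d(q,V) < h^{\leftarrow}(t)\). Since \(\Vor(X_1,X_2) = \Vor(X_1) \cup \Vor(X_2)\), the cell \(V\) equals \(\Vor(X_i,x)\) for a single index \(i \in \{1,2\}\), and I would keep this \(i\) fixed throughout. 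Choosing \(q' \in V\) with \(d(q,q') < h^{\leftarrow}(t)\) and setting \(\gamma'(s) = (p,(1-s)p + sq')\), the reduction to showing that each \((p,\gamma'(s))\) lies in the thickening of the relative discrete dissimilarity \(\dC^{X_1,X_2}\) is identical to the absolute case, and so is the triangle-inequality estimate that passes back from \(\gamma'\) to \(\gamma\).

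The decisive observation is that the cells of the single diagram \(\Vor(X_i)\) already tile \(\R{d}\), so for each \(s\) the point \((1-s)p + sq'\) lies in some cell \(\Vor(X_i,y)\) with \(y \in X_i\), and \((y,\Vor(X_i,y)) \in \Vor(X_i) \subseteq \Vor(X_1,X_2)\) is a legitimate vertex of the relative complex. It therefore suffices to prove \(d(y,p) < t\): then this vertex witnesses \((p,\gamma'(s)) \in (\dC^{X_1,X_2})^t\). For \(y = x\) this is immediate from \(d(x,p) < t\), so I would assume \(y \ne x\) and run the bisector argument of \cref{linelemma} unchanged. Writing \(H\) for the bisecting hyperplane of \(x\) and \(y\) and \(H_+ = \{z \mid d(x,z) \ge d(y,z)\}\), \(H_- = \{z \mid d(x,z) \le d(y,z)\}\), the inclusion \((1-s)p + sq' \in \Vor(X_i,y)\) gives \((1-s)p + sq' \in H_+\) while \(q' \in \Vor(X_i,x)\) gives \(q' \in H_-\); since the segment from \(p\) to \(q'\) meets \(H\) at most once and already crosses it between the parameter \(s\) and its endpoint \(q'\), the point \(p\) must lie on the same side \(H_+\), whence \(d(y,p) \le d(x,p) < t\).

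The only place relativity enters is this: both \(x\) and \(y\) are drawn from the \emph{same} point set \(X_i\), so their cells belong to one and the same tiling and the bisector dichotomy is available; the combined diagram \(\Vor(X_1,X_2)\) is not a tiling, but the argument never requires it to be, since one only ever compares \(V\) with a neighbouring cell of the same diagram. I regard this as the subtle point rather than a genuine obstacle. Once \cref{relativelinelemma} is in hand, the straight-line homotopy \((p,q) \mapsto (p,(1-u)q + up)\) stays in \((\DC^{X_1,X_2})^t\) and provides the deformation retraction relating \((\DC^{X_1,X_2})^t\) and \((X_1 \cup X_2)^t\) exactly as in the absolute case, which is what the surrounding text needs.
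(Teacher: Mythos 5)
Your proof is correct and rests on exactly the same key observation as the paper's: the witness vertex of \((p,q)\) belongs to a single diagram \(\Vor(X_i)\), so the entire argument takes place within the absolute setting for \(X_i\). The paper packages this as the decomposition \((\DC^{X_1,X_2})^t = (\DC^{X_1})^t \cup (\DC^{X_2})^t\) and then invokes \cref{linelemma} as a black box, whereas you re-run that lemma's proof inline with the bookkeeping made explicit; the mathematical content is the same.
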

\begin{proof}
  Given
  \((p,q) \in (\DC^{X_1, X_2})^t = (\DC^{X_1})^t \cup
(\DC^{X_2})^t\), we have \((p,q) \in (\DC^{X_i})^t\) for some
\(i \in \{1, 2\}\). Then also \((p, p)\) lies in
\((\DC^{X_i})^t\), and \cref{linelemma} proves the claim.
\end{proof}

\section{Implementation Of The Relative Delaunay \v{C}ech Complex}
\label{sec:computedelcech}

In this section we explain how the relative Delaunay complex can be
realized as a standard Delaunay complex by embedding in one dimension
higher.

We fix some notation used in this section: \(X_1 \subseteq \R{d}\) and
\(X_2 \subseteq \R{d}\) are 
finite subsets. We let \(s\) be a positive real number, we
let \(Z = X_1 \times \{s\} \cup X_2 \times \{-s\}\) and we let \(\pr
\colon \R{d+1} \to \R{d}\) be the projection omitting the 
last coordinate. 

\begin{lemma}
  The projection \(\pr \colon \R{d+1} \to \R{d}\) induces a surjection
  \begin{displaymath}
    \Vor(Z)
    \xrightarrow{g} \Vor(X_1, X_2), \qquad ((x, s), V) \mapsto (x,
    V(X_1, x)), \quad ((x, -s), V) \mapsto (x,
    V(X_2, x)),
  \end{displaymath}
  with \(\pr(V) \subseteq V(X_i,x)\) for \(x \in X_i\).  Given
  \((x, V) \in \Vor(X_1, X_2)\) the fiber \(g^{-1}((x,V))\) consists
  of all elements of \(\Vor(Z))\) of the form \(((x,a), V)\) for
  \(a \in \{\pm s\}\).
\end{lemma}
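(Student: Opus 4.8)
The plan is to verify the four assertions bundled into the statement---well\-definedness of $g$, the containment $\pr(V) \subseteq \Vor(X_i,x)$, surjectivity, and the description of the fibers---directly from the definitions of the Voronoi diagrams involved, the only genuine geometric input being a single distance computation.

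First I would record the elementary observation that, since $s > 0$, the two pieces $X_1 \times \{s\}$ and $X_2 \times \{-s\}$ of $Z$ are disjoint subsets of $\R{d+1}$: a point of $Z$ has last coordinate either $s$ or $-s$, and these differ. Hence every $z \in Z$ is of exactly one of the two forms $(x,s)$ with $x \in X_1$ or $(x,-s)$ with $x \in X_2$, so the case-defined rule for $g$ assigns a single value, landing in $\Vor(X_1) \subseteq \Vor(X_1,X_2)$ or in $\Vor(X_2) \subseteq \Vor(X_1,X_2)$ respectively. This confirms that $g$ is a well-defined map into $\Vor(X_1,X_2)$.

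The geometric heart is the containment $\pr(V) \subseteq \Vor(X_i,x)$, which I would prove by the distance cancellation that occurs because all lifted sites sharing a level sit at the same height. For $z = (x,s)$ with $x \in X_1$ and any $(p,u) \in V = \Vor(Z,(x,s))$, comparing $z$ with another lifted site $(y,s)$, $y \in X_1$, gives $d(x,p)^2 + (s-u)^2 = d((x,s),(p,u))^2 \le d((y,s),(p,u))^2 = d(y,p)^2 + (s-u)^2$, and cancelling the common $(s-u)^2$ yields $d(x,p) \le d(y,p)$ for every $y \in X_1$; thus $p = \pr(p,u) \in \Vor(X_1,x)$. The argument for $z = (x,-s)$, $x \in X_2$, is identical with $X_1$ replaced by $X_2$. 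Surjectivity is then immediate from the construction: given $(x,\Vor(X_1,x)) \in \Vor(X_1)$ we have $(x,s) \in Z$, so the element $((x,s),\Vor(Z,(x,s)))$ of $\Vor(Z)$ maps to it, and symmetrically every element of $\Vor(X_2)$ is hit using the level $-s$; since $\Vor(X_1,X_2) = \Vor(X_1) \cup \Vor(X_2)$, this covers the whole target.

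Finally, for the fibers I would read off the two defining cases of $g$: an element $(z,\Vor(Z,z))$ maps to $(x,V)$ only if the first coordinate of $z$ equals $x$, forcing $z = (x,a)$ with $a \in \{\pm s\}$, and then $a$ must be a level for which $(x,a) \in Z$ and the associated base cell ($\Vor(X_1,x)$ when $a=s$, $\Vor(X_2,x)$ when $a=-s$) equals $V$. This yields exactly the elements $((x,a),\Vor(Z,(x,a)))$ with $a \in \{\pm s\}$ described in the statement (where the symbol $V$ there abbreviates the $(d+1)$\-dimensional cell $\Vor(Z,(x,a))$). I do not expect a serious obstacle here; the only points requiring care are the bookkeeping that ties each level $\pm s$ to the correct set $X_1$ or $X_2$, and the mild subtlety that a point lying in $X_1 \cap X_2$ produces two distinct sites of $Z$ over $x$, so that both levels contribute to the fiber precisely when $\Vor(X_1,x) = \Vor(X_2,x) = V$.
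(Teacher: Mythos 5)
Your proof is correct and follows essentially the same route as the paper: the key step, the containment \(\pr(V) \subseteq \Vor(X_i,x)\), is proved by exactly the paper's argument of comparing distances to lifted sites at the same height so that the vertical term cancels. The paper's proof consists only of this step, leaving well-definedness, surjectivity, and the fiber description as immediate from the definitions, whereas you spell these out (including the correct handling of points in \(X_1 \cap X_2\)), which is a harmless elaboration rather than a different approach.
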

\begin{proof}
  We show that \(\pr(V) \subseteq V(X_1,x_1)\) for
  \(((x_1,s), V) \in \Vor(Z)\)
  with \(x_1 \in X_1\).
  Given \((p,r) \in V\) we have for all points of
  the form \((x_1',s)\) for \(x_1'\in X_1\) that
  \(d((p,r), (x_1,s)) \le d((p,r), (x_1',s))\). This implies that
  \(d(p,x_1) \le d(p,x_1')\), and thus \(p \in V(X_1,x_1)\). We
  conclude that \(\pr(V) \subseteq V(X_1,x_1)\). An analogous argument
  applies for elements of the form \(((x_2,-s), V)\) in
  \(\Vor(Z)\). 
\end{proof}
Let \(s_1\) be larger than the largest filtration value of the alpha
complex of \(X_1\).  Then the function
\(j_1 \colon \Vor(X_1) \to \Vor(Z)\) defined by
\(j_1(x_1,V) = ((x_1,s), V(Z, (x_1,s)))\) induces a simplicial map of nerves
\(\del(X_1) \to \del(Z)\) for all \(s > s_1\).  
Similarly, there is a simplicial map \(\del(X_2) \to \del(Z)\) for all \(s > s_2\)
when \(s_2\) is larger than all filtration values of the alpha
complex of \(X_2\).
Let
\(s(X_1,X_2) = \max(s_1, s_2)\).

Choose \(\varepsilon > 0\) satisfying the following two criteria:
\begin{enumerate}
\item \(2 \varepsilon < \varepsilon_{\sigma}\) for every subset
  \(\sigma\) of \(\Vor(X_1,X_2)\) that is not in \(\Del(X_1, X_2)\).
\item \(2 \varepsilon < \varepsilon_{\sigma}\) for every subset
  \(\sigma\) of \(\Vor(Z)\) that is not in \(\Del(Z)\).
\end{enumerate}
Let \(h \colon [0, \infty] \to [0, \infty]\) be the order
preserving map defined in \cref{voronoithickening}, and let \(\Del^Z\) and
\(\Del^{X_1, X_2}\) be constructed using \(h\). We define a new dissimilarity
\begin{displaymath}
  D \colon \Vor(Z) \times (\R{d} \times \R{d+1}) \to [0, \infty], \quad
  D((z,V), (p, q)) = \max(d(\pr(z), p), \Del^Z((z,V), q)).
\end{displaymath}
Note that the underlying simplicial complex \(\bigcup_{t>0} ND_t\) of
the nerve of \(D\) is the Delaunay complex \(\del(Z)\). The
filtration value of \(\sigma \in \del(Z)\) in
the neve of \(D\) is the filtration value of \(g(\sigma)\) in the
nerve of \(\DC^{X_1, X_2}\). 
\begin{proposition} \label{projectionequivalence}
  Let \(X_1 \subseteq \R{d}\) and \(X_2 \subseteq \R{d}\) be
  finite. Choose \(s > s(X_1, X_2)\). Then
  \(\Vor(Z) \xrightarrow{g} \Vor(X_1, X_2)\) and
  \(\id \times \pr \colon \R{d} \times \R{d+1} \to \R{d} \times
  \R{d}\) form a morphism
  \[f = (g, \id \times \pr) \colon D \to \DC^{X_1,X_2}\] 
  of
  dissimilarities inducing a homotopy equivalence
  \begin{displaymath}
    g \colon ND_t \to
    N\DC^{X_1, X_2}_t
  \end{displaymath}
  for every \(t > 0\).
\end{proposition}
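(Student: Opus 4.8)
The statement has two parts: that $f = (g, \id \times \pr)$ is a morphism of dissimilarities, and that the induced simplicial map $g \colon ND_t \to N\DC^{X_1,X_2}_t$ is a homotopy equivalence for each $t > 0$. The plan is to dispatch the morphism condition first. Writing $z = (x,a)$ with $a \in \{\pm s\}$ and $x = \pr(z) \in X_i$, the \v{C}ech coordinates of $D$ and $\DC^{X_1,X_2}$ already agree under $f_2 = \id \times \pr$, since the term $d(\pr(z), p)$ is unchanged. So the required inequality $\DC^{X_1,X_2}(g(z,V), (p,\pr(q))) \le D((z,V),(p,q))$ reduces, after dropping the common \v{C}ech term and using that $h$ is order preserving, to $d(\pr(q), \Vor(X_i,x)) \le d(q,V)$. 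This follows from two facts: $\pr$ is $1$-Lipschitz, so $d(\pr(q),\pr(V)) \le d(q,V)$, and $\pr(V) \subseteq \Vor(X_i,x)$ by the preceding lemma, so that passing to the larger set only decreases the distance. This part is routine.

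For the homotopy equivalence I would invoke the functorial Nerve Lemma. The $D$-balls are products $B_{d}(\pr(z),t) \times \{q : d(q,V) < h^{\leftarrow}(t)\}$ of a Euclidean ball with a neighbourhood of the convex cell $V$, hence convex; the $\DC^{X_1,X_2}$-balls are convex for the same reason. Thus both covers are good, and \cref{thm:partitionofunity} makes the partition-of-unity maps $\varphi^t$ and $\psi^t$ homotopy equivalences. Feeding the morphism $f$ into \cref{prop:realizationmorphismcommute} produces a square relating $|g|$ and $\id \times \pr$ that commutes up to homotopy, so by two-out-of-three it suffices to show that $\id \times \pr \colon D^t \to (\DC^{X_1,X_2})^t$ is a homotopy equivalence. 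I would establish this by factoring through the common \v{C}ech thickening: projection to the $p$-coordinate gives surjections $\pi \colon D^t \to (X_1 \cup X_2)^t$ and $\pi' \colon (\DC^{X_1,X_2})^t \to (X_1 \cup X_2)^t$ with $\pi = \pi' \circ (\id \times \pr)$. Here $\pi'$ is a homotopy equivalence because the diagonal inclusion $(X_1 \cup X_2)^t \to (\DC^{X_1,X_2})^t$ is a deformation retract, as shown after \cref{relativelinelemma}; so, by two-out-of-three again, everything comes down to proving that $\pi$ is a homotopy equivalence.

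The heart of the proof, and the step I expect to be the main obstacle, is showing that $\pi$ is a homotopy equivalence, i.e.\ that $D^t$ deformation retracts onto its \v{C}ech thickening. I would prove a line lemma for $D$ in the spirit of \cref{linelemma}: for $(p,q) \in D^t$ the straight segment from $(p,q)$ to $(p,(p,0))$ stays in $D^t$, which gives the retraction $(p,q) \mapsto (p,(p,0))$ onto the section $\{(p,(p,0)) : p \in (X_1 \cup X_2)^t\}$, a homeomorphic copy of $(X_1 \cup X_2)^t$ realizing a section of $\pi$. Concretely, I would pick a witness $(z_0,V_0)$ with $z_0 = (x_0,a_0)$ and $d(x_0,p) < t$, and $q' \in V_0$ with $d(q,q') < h^{\leftarrow}(t)$; writing $w_\mu = (1-\mu)(p,0) + \mu q'$, every $w_\mu$ lies in some cell $\Vor(Z,(y,a))$, and since the companion point $(1-\mu)(p,0)+\mu q$ stays within $h^{\leftarrow}(t)$ of $w_\mu$, it suffices to see that $d(y,p) < t$. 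This is exactly the bisector argument of \cref{linelemma}, run on the segment from $(p,0)$ to $q'$ in \R{d+1}: the bisector of $(x_0,a_0)$ and $(y,a)$ separates $w_\mu$ (on the $(y,a)$-side) from $q'$ (on the $(x_0,a_0)$-side), forcing $(p,0)$ onto the $(y,a)$-side, i.e.\ $d((p,0),(y,a)) \le d((p,0),(x_0,a_0))$.

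The crucial point is that every point of $Z$ has last coordinate of the same magnitude $s$, so this last inequality reads $d(p,y)^2 + a^2 \le d(p,x_0)^2 + a_0^2$ with $a^2 = a_0^2 = s^2$; the height terms cancel and it collapses to $d(y,p) \le d(x_0,p) < t$, precisely as in the absolute case. Continuity of the retraction is immediate since the target $(p,0)$ depends continuously on $p$. I would then assemble the pieces: $\pi$ and $\pi'$ are homotopy equivalences fitting into $\pi = \pi' \circ (\id \times \pr)$, hence $\id \times \pr$ is one, hence $|g|$ is one by the homotopy-commutative square. I expect the genuine work to be entirely in verifying the line lemma for $D$ and in checking that the separation hypothesis $s > s(X_1,X_2)$ enters only where it is actually used, namely in matching the filtration values of $D$ and $\DC^{X_1,X_2}$ recorded before the proposition, rather than in the homotopy equivalence at a fixed $t$.
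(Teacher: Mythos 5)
Your proposal is correct, but it takes a genuinely different route from the paper's. The paper stays at the simplicial level: invoking the Nerve Lemma, it reduces the claim to showing that \(g^{-1}(\sigma)\) is a simplex of \(ND_t\) for every simplex \(\sigma\) of \(N\DC^{X_1,X_2}_t\), and verifies this by a case analysis on \(g^{-1}(\sigma) = \tau_1 \cup \tau_2\): when one of the \(\tau_i\) is empty it uses the hypothesis \(s > s(X_1,X_2)\) (via the simplicial map \(j_1\)), and when both are nonempty it produces a common Voronoi point of \(g^{-1}(\sigma)\) at an intermediate height by the intermediate value theorem. You instead work at the level of thickenings: the functorial Nerve Lemma (\cref{prop:realizationmorphismcommute}), convexity of the \(D\)- and \(\DC^{X_1,X_2}\)-balls, and two-out-of-three reduce everything to \(\id \times \pr \colon D^t \to (\DC^{X_1,X_2})^t\) being a homotopy equivalence, which you get by factoring through \((X_1 \cup X_2)^t\) and proving a line lemma for \(D\) in the style of \cref{linelemma}; your key observation --- that every point of \(Z\) has last coordinate \(\pm s\), so the bisector inequality \(d(p,y)^2 + s^2 \le d(p,x_0)^2 + s^2\) collapses to the \(d\)-dimensional comparison \(d(p,y) \le d(p,x_0) < t\) --- is exactly what makes the argument of \cref{linelemma} go through unchanged in \(\R{d+1}\). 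Comparing the two: the paper's proof is purely combinatorial and yields the stronger statement that \(g\) has simplex preimages, but it needs the hypothesis on \(s\) inside the proof; yours reuses the deformation-retraction machinery already developed in Sections 3--5, avoids both the case analysis and the IVT step, and, as you suspected, never uses \(s > s(X_1,X_2)\) --- so your argument in fact shows the homotopy equivalence at each fixed \(t\) for every \(s > 0\), the hypothesis being relevant only to the filtration and simpliciality statements surrounding the proposition. Two small points to make explicit in a write-up: the section \(\{(p,(p,0)) \mid p \in (X_1\cup X_2)^t\}\) must be shown to lie in \(D^t\) (equivalently, that \(\pi\) is surjective, which your factorization needs); this follows from the same height-cancellation argument applied to the owner of the Voronoi cell of \(Z\) containing \((p,0)\). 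And the degenerate case where \(w_\mu = q'\) lies on the bisector deserves the same one-line treatment that \cref{linelemma} gives it by restricting to parameters strictly less than \(1\).
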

\begin{proof}
  For \(i = 1,2\) the inclusion \(\pr(V) \subseteq V(X_i,x)\) for \(((x,(-1)^{i-1}s), V)
  \in \Vor(Z)\) implies that 
  \[\Del^{X_1, X_2}(g(z,V), \pr(q)) \le \Del^Z((z, V), q)\]
  for all \(((z, V), q)\in \Vor(Z)\). So we have a morphism \(f = (g,
  \id \times \pr) \colon D \to 
  \DC^{X_1,X_2}\).

  In order to show that \(g\) induces a homotopy equivalence of geometric
  realizations, by the Nerve Lemma, it suffices to show that given a
  simplex \(\sigma\) of \(N\DC^{X_1, X_2}_t\), the inverse image
  \(g^{-1}(\sigma)\) is a simplex of \(ND_t\).  Let \(p\) be a point
  in the intersection of the Voronoi cells in \(\sigma\).  Write
  \(g^{-1}(\sigma) = \tau_1 \cup \tau_2\), where \(\tau_1\) consists
  of Voronoi cells with centers at height \(s\) and \(\tau_2\)
  consists of Voronoi cells with centers at height \(-s\).  Let
  \(\sigma_1 = \{(x_1,s) \, \mid \, (x_1, V(X_1, x_1) ) \in \sigma\}\)
  and
  \(\sigma_2 = \{(x_2, -s) \, \mid \, (x_2, V(X_2, x_2) ) \in
  \sigma\}\).

  Suppose that \(\tau_2\) is empty. Then actually
  \(\sigma \in \DC^{X_1}_t\), and since \(s > s_1\) we know that
  \(j_1(\sigma) \in \del(Z)\). Since \(g \circ j_1\) is the inclusion
  of \(\Vor(X_1)\) in \(\Vor(X_1, X_2) = \Vor(X_1) \cup \Vor(X_2)\) we
  know that \(j_1(\sigma) \subseteq g^{-1}(\sigma) = \tau_1\) and that
  \(j_1(\sigma) \in ND_t\). On the other hand, since \(\tau_2\) is
  empty and \(j_1\) is injective, we know that
  \(g^{-1}(\sigma)\) has the same cardinality as \(j_1(\sigma)\), so
  they must be equal. We conclude that \(g^{-1}(\sigma)\) is a simplex
  of \(ND_t\). A similar argument applies when \(\tau_1\) is empty.

  In the remaining case where both \(\tau_1\) and \(\tau_2\) are
  nonempty, the function
  \begin{displaymath}
    f \colon \R{d+1} \to \R, \qquad f(a) = d_{\Vor}(a, \sigma_1) - d_{\Vor}(a,\sigma_2)
  \end{displaymath}
  has \(f((p,-s)) > 0\) and \(f((p,s)) < 0\). By the intermediate
  value theorem there exists \(t \in [-s,s]\) with \(f(p,t) =
  0\). Since \((p,t)\) has the same distance to all elements of
  \(\sigma_1\) and also has the same distance to all elements of
  \(\sigma_2\) we conclude that \((p,t)\) is in the intersection of
  the Voronoi cells in \(g^{-1}(\sigma) = \tau_1 \cup \tau_2\). Thus
  \(\DC^Z((z,V), p) = 0\) and \(d(\pr(z), p) < t\) for all
  \((z,V) \in g^{-1}(\sigma)\). In particular
  \(g^{-1}(\sigma) \in ND_t\).
\end{proof}

We are now ready to compute persistent homology of \(X_1 \cup X_2\)
relative to \(X_1\). The relative Delaunay-\v{C}ech complex
\(\DC(X_1 \cup X_2 , X_1)\) is the filtered simplicial complex with
\(\DC(X_1 \cup X_2 , X_1)_t = j_1(\del(X_1)) \cup ND_t\).
\begin{theorem}
  Let \(X_1 \subseteq \R{d}\) and \(X_2 \subseteq \R{d}\) be
  finite. Choose \(s > s(X_1, X_2)\). Then there is an isomorphism
  \begin{displaymath}
    (H_*(\DC(X_1 \cup X_2 , X_1)_t))_{t > 0} \cong (H_*((X_1 \cup X_2)^t, X_1^t))_{t > 0}
  \end{displaymath}
  of persistence modules.
\end{theorem}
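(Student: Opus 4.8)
The plan is to read off the left-hand side as the relative homology of a pair of Dowker nerves and then transport that pair to the thickenings \(X_1^t \subseteq (X_1\cup X_2)^t\) via the functorial Nerve Lemma; here \(H_*\) of the relative complex is understood as reduced homology, as in the convention of \cref{sec:relativepersistence}. Write \(\DC(X_1\cup X_2, X_1)_t = ND_t \cup L\) with \(L = j_1(\del(X_1))\), which is a genuine subcomplex of \(\del(Z)\) because \(s > s(X_1, X_2)\). Since \(j_1\) is injective on vertices, \(L\) is simplicially isomorphic to \(\del(X_1) = N\Del^{X_1}_t\), whose realization is homotopy equivalent by \cref{thm:partitionofunity} to the thickening \((\Del^{X_1})^t = \R{d}\) and is therefore contractible. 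For a union \(K\cup L\) of subcomplexes with \(L\) contractible, excision gives \(H_*(K\cup L, L)\cong H_*(K, K\cap L)\), and the reduced long exact sequence of \((K\cup L, L)\) gives \(\tilde H_*(K\cup L)\cong H_*(K\cup L, L)\); applying this with \(K = ND_t\) reduces the left-hand side to \(H_*(ND_t,\, ND_t\cap j_1(\del(X_1)))\).

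The first technical point is the combinatorial identity \(ND_t\cap j_1(\del(X_1)) = j_1(N\DC^{X_1}_t)\). A simplex \(j_1(\sigma)\) with \(\sigma\in\del(X_1)\) lies in \(ND_t\) exactly when its \(D\)-filtration value is below \(t\); by the filtration-value correspondence recorded just before \cref{projectionequivalence} and the identity \(g\circ j_1 = \big(\Vor(X_1)\hookrightarrow\Vor(X_1,X_2)\big)\), this \(D\)-filtration value equals the \(\DC^{X_1,X_2}\)-filtration value of \(\sigma\), which, since \(\sigma\) has all its vertices in \(\Vor(X_1)\), is the \(\DC^{X_1}\)-filtration value of \(\sigma\). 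Hence \(j_1(\sigma)\in ND_t\) if and only if \(\sigma\in N\DC^{X_1}_t\). As \(j_1\) is an isomorphism onto its image, \(|ND_t\cap j_1(\del(X_1))|\cong |N\DC^{X_1}_t|\), and the absolute computation of \cref{sec:delaunaycech} (the Nerve Lemma together with \cref{linelemma}) gives \(|N\DC^{X_1}_t|\simeq X_1^t\).

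It remains to identify the relative homology of \((ND_t,\, ND_t\cap j_1(\del(X_1)))\) with that of \(((X_1\cup X_2)^t, X_1^t)\). On total spaces, \cref{projectionequivalence} gives \(|ND_t|\xrightarrow{|g|}|N\DC^{X_1,X_2}_t|\), and the Nerve Lemma with the deformation retraction of \cref{relativelinelemma} gives \(|N\DC^{X_1,X_2}_t|\simeq (X_1\cup X_2)^t\); on subspaces we use the identification of the previous paragraph. These two equivalences are compatible with the maps induced by the subspace inclusions: the square whose verticals are the nerve inclusion \(|N\DC^{X_1}_t|\to|N\DC^{X_1,X_2}_t|\) and the thickening inclusion \(X_1^t\to(X_1\cup X_2)^t\) commutes up to homotopy by \cref{prop:realizationmorphismcommute}, applied to the morphism \(\DC^{X_1}\to\DC^{X_1,X_2}\) induced by \(\Vor(X_1)\hookrightarrow\Vor(X_1,X_2)\). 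Passing to homology yields a commuting ladder between the long exact sequences of the two pairs in which the total-space and subspace arrows are isomorphisms, so the five lemma gives \(H_*(ND_t,\, ND_t\cap j_1(\del(X_1)))\cong H_*((X_1\cup X_2)^t, X_1^t)\).

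Finally, everything is natural in \(t\): the inclusions \(ND_t\subseteq ND_{t'}\) and \(X_1^t\subseteq X_1^{t'}\), the fixed subcomplex \(L\), and the partition-of-unity maps (which commute with the structure maps up to homotopy, as noted in \cref{sec:dowkernerves}) make each isomorphism above compatible with the transition maps for \(t\le t'\), so on homology we obtain an isomorphism of persistence modules. I expect the main obstacle to be the compatibility in the third paragraph, namely checking that the Nerve Lemma equivalence for \(\DC^{X_1,X_2}\) restricts up to homotopy to the one for \(\DC^{X_1}\); this is exactly where the functorial Nerve Lemma \cref{prop:realizationmorphismcommute} and the naturality of the deformation retractions of \cref{linelemma,relativelinelemma} do the work.
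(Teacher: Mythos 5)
Your proposal is correct and follows essentially the same route as the paper: both arguments rest on the contractibility of \(j_1(\del(X_1))\), the identification of \(ND_t \cap j_1(\del(X_1))\) with \(N\DC^{X_1}_t\) via \(g\), the homotopy equivalence \(|ND_t| \to |N\DC^{X_1,X_2}_t|\) from \cref{projectionequivalence}, and the Nerve Lemma identifications of \(|N\DC^{X_1,X_2}_t|\) and \(|N\DC^{X_1}_t|\) with the thickenings \((X_1 \cup X_2)^t\) and \(X_1^t\). The only divergence is the final homological bookkeeping---the paper collapses the contractible subcomplex and compares quotient spaces, while you use simplicial excision plus a five-lemma ladder---and your explicit appeal to \cref{prop:realizationmorphismcommute} to make that ladder commute up to homotopy, together with your proof of the intersection identity via the filtration-value correspondence, spells out compatibilities that the paper's quotient argument leaves implicit.
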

\begin{proof}
  Since \(j_1(\del(X_1)\) is contractible, the geometric realization
  of \(\DC(X_1 \cup X_2 , X_1)_t\) is homotopy equivalent to the
  quotient space \(|\DC(X_1 \cup X_2 , X_1)_t|/|j_1(\del(X_1)|\). This
  quotient space is homeomorphic to
  \(|ND_t|/|ND_t \cap j_1(\Del(X_1))|\). By \cref{projectionequivalence}
  the map \(g \colon ND_t \to N\DC^{X_1,X_2}_t\) induces a homotopy
  equivalence of geometric realizations. Moreover \(g\) induces an
  isomorphism \(ND_t \cap j_1(\Del(X_1)) \to N\DC^{X_1}_t\). Combining
  these two statements, \(g\)
  induces a homotopy equivalence
  \(|ND_t|/|ND_t \cap j_1(\Del(X_1))| \to |N\DC^{X_1,X_2}_t|/
  |N\DC^{X_1}_t|\).
  The space
  \(|N\DC^{X_1, X_2}_t|\) is homotpy equivalent to the Euclidean
  \(t\)-thickening \((X_1 \cup X_2)^t\) of \(X_1 \cup X_2\) and
  \(|N\DC^{X_1}_t|\) is homotopy equivalent to the Euclidean
  \(t\)-thickening \(X_1^t\) of \(X_1\).
\end{proof}

Finally, we note that the size of the relative Delaunay-\v{C}ech
complex grows linearly with the sizes \(n_i\) of the finite subsets
\(X_i\).  The Delaunay triangulation of \(n\)
points in \(d\) dimensions contains at most
\(O(n \lceil d / 2 \rceil)\) simplices \cite{Seidel1995}. Since we use
the Delaunay triangulation of \(n_1 + n_2\) points in \(d+1\)
dimensions to compute the relative Delaunay-\v{C}ech complex, it
contains at most \(O((n_1+n_2) \lceil (d+1)/2 \rceil)\) simplices.
This concludes the proof of \cref{thm:main}.

\bibliographystyle{unsrt}
\bibliography{refs}

\end{document}